\documentclass{amsart}

\usepackage{amsmath, amsthm, amssymb, amstext, amsfonts}
\usepackage{enumerate}
\usepackage{graphicx}
\usepackage[T1]{fontenc} %needed for polic ogonek
\usepackage[applemac]{inputenc}

\usepackage{graphicx}
\usepackage{xcolor}
\usepackage{tikz}
\usetikzlibrary{arrows}

\usepackage[backref]{hyperref}
\hypersetup{
    colorlinks,
    linkcolor={red!60!black},
    citecolor={green!60!black},
    urlcolor={blue!60!black},
    pdftitle={},
    pdfauthor={}
}

\theoremstyle{plain}

\newtheorem{thm}{Theorem}[section]
\newtheorem{lem}[thm]{Lemma}

\newtheorem{prob}[thm]{Problem}
\newtheorem*{prob*}{Problem}
\newtheorem{conj}[thm]{Conjecture}

\theoremstyle{definition}

\newcommand{\N}{\ensuremath{\mathbb{N}}}

\newcommand{\cE}{\ensuremath{\mathcal{E}}}

\newcommand{\cV}{\ensuremath{\mathcal{V}}}

\newcommand{\sm}{\ensuremath{\smallsetminus}}

\newcommand{\Aut}{\textnormal{Aut}}

\newcommand{\sub}{\subseteq}

\def\td{tree-decom\-po\-si\-tion}
\def\ta{tree amalgamation}
\def\qt{quasi-tran\-si\-tive}
\def\qi{quasi-iso\-metric}
\def\qiy{quasi-iso\-me\-try}
\def\qis{quasi-iso\-me\-tries}

\def\lf{locally finite}

\def\tw{tree-width}

\newcommand{\comment}[1]{}

%   The \? macro puts
%   the argument #1 in the left margin. Examples: \??, \?{What nonsense!}.
\def\?#1{\vadjust{\vbox to 0pt{\vss\vskip-8pt\leftline{%
     \llap{\hbox{\vbox{\pretolerance=-1
     \doublehyphendemerits=0\finalhyphendemerits=0
     \hsize16truemm\tolerance=10000\small
     \lineskip=0pt\lineskiplimit=0pt
     \rightskip=0pt plus16truemm\baselineskip8pt\noindent
     \hskip0pt        %(without this, the first word is never hyphenated!)
     #1\endgraf}\hskip7truemm}}}\vss}}}

% equation that may span several lines. Text is set in italic. 

\newenvironment{txteq*}
  {
    \begin{equation*}
    \begin{minipage}[c]{0.85\textwidth} % set width to 0.9 x textwidth
    \em                                % switch on emph
  }
  {\end{minipage}\end{equation*}\ignorespacesafterend}

\begin{document}

\title{Quasi-transitive $K_\infty$-minor free graphs}
\author{Matthias Hamann}
\address{Matthias Hamann, Department of Mathematics, University of Hamburg, Hamburg, Germany}

\date{}

\begin{abstract}
We prove that every \lf\ \qt\ graph that does not contain $K_\infty$ as a minor is \qi\ to some planar \qt\ \lf\ graph.
This solves a problem of Esperet and Giocanti and improves their recent result that such graphs are \qi\ to some planar graph of bounded degree.
\end{abstract}

\maketitle

\section{Introduction}

Recently, Esperet and Giocanti \cite{EG-QTGraphsWithoutMinor} proved a theorem for \qt\ graphs, where a graph is \emph{\qt} if its automorphism group acts on its vertex set with only finitely many orbits.
Before we state their theorem , let us briefly introduce \qis.
A graph $G$ is \emph{\qi} to another graph~$H$ if there exists $\gamma\geq 1$ and $c\geq 0$ and a map $\varphi\colon V(G)\to V(H)$ such that the following holds.
\begin{enumerate}[(i)]
\item $\frac{1}{\gamma}d_G(u,v)-c\leq d_H(\varphi(u),\varphi(v))\leq \gamma d_G(u,v)+c$ for all $u,v\in V(G)$ and
\item $d_H(w,\varphi(V(G)))\leq c$ for all $w\in V(H)$.
\end{enumerate}
Then $\varphi$ is a \emph{\qiy}.
If the constants $\gamma$ and~$c$ are important, we call $\varphi$ also a \emph{$(\gamma,c)$-\qiy} and say that $G$ and~$H$ are \emph{$(\gamma,c)$-\qi}.

Now we are able to state the theorem of Esperet and Giocanti.

\begin{thm}\label{thm_EG}\cite[Theorem 1.3]{EG-QTGraphsWithoutMinor}
Every \lf\ \qt\ graph that does not contain $K_\infty$ as a minor is \qi\ to some planar graph of bounded degree.
\end{thm}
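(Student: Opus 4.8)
Since the automorphism group of a \qt\ graph permutes the connected components with finitely many orbits and each component is again \lf\ and \qt\ without a $K_\infty$ minor, we may assume that $G$ is connected. The plan is then to route everything through a canonical tree-decomposition. Combining the (known) accessibility of \lf\ \qt\ graphs without a $K_\infty$ minor with the structural fact that the highly connected ``parts'' of such a graph are planar, one aims to produce an $\Aut(G)$-invariant \td\ $(\cT,\cV)$ of~$G$ with only finitely many $\Aut(G)$-orbits on $V(\cT)$ such that every torso $H_t$ is a \lf\ graph that is \qt\ under its stabiliser $\Aut(G)_t$ and is either finite or planar, and such that, after adding a virtual clique on each adhesion set, all these cliques have size at most~$3$ and bound faces of the (essentially unique) embeddings of the planar torsos. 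Producing this ``plane, small-adhesion, canonical'' form is the technical heart of the argument: planarity of the pieces is the more accessible aspect, whereas forcing the adhesion sets to be small cliques sitting on faces, $\Aut(G)$-equivariantly, is exactly where the absence of a $K_\infty$ minor has to be used — a badly nested sequence of separators of growing size inside a ``planar'' part is essentially what produces such a minor.

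Granting this, one builds the model graph~$H$ as a \ta\ along~$\cT$. Replace each finite torso by a bounded gadget that presents its incident virtual cliques on a common face (a single vertex when these cliques have size at most~$1$); keep each infinite torso with its planar embedding; and for every edge $tt'\in E(\cT)$ identify the two copies of the shared adhesion set. Since each adhesion clique bounds a face on both sides, every such identification is an iterated $1$-, $2$- or $3$-sum of planar graphs and hence planar, and performing all of them along the tree — nesting the parts into pairwise disjoint closed discs following~$\cT$ — yields a planar graph~$H$. Because $(\cT,\cV)$ is $\Aut(G)$-invariant with finitely many orbits and each torso is \qt\ under its stabiliser, the group assembled from these stabilisers over~$\cT$ (the fundamental group of the associated graph of groups) acts on~$H$ with finitely many orbits, so $H$ is \qt; and $H$ is \lf\ because the torsos are, the adhesion is bounded, and there are finitely many orbits. (In particular $H$ then has bounded degree, so this does strengthen Theorem~\ref{thm_EG}.)

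It remains to exhibit a \qiy\ $G\to H$. Let $\varphi\colon V(G)\to V(H)$ send a vertex to its (well-defined) image in the torsos containing it, collapsing gadgets to their attachment sets. Co-boundedness is clear: the vertices of~$H$ outside the image of~$\varphi$ are gadget vertices, of which there are boundedly many per part. For the two-sided distance estimate one argues as in the \v{S}varc--Milnor description of tree amalgamations: in a canonical \td\ of a \qt\ graph each part is quasi-isometrically embedded in~$G$, and each torso is likewise quasi-isometrically embedded in~$H$ (a torso differs from its part only by bounded virtual cliques), so within a common part the two metrics agree up to bounded multiplicative and additive error, while for vertices in different parts both metrics accumulate additively along the same finite path of~$\cT$. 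Hence $G$ is \qi\ to the planar \qt\ \lf\ graph~$H$, as desired.
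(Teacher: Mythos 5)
Note first that Theorem~\ref{thm_EG} is quoted from Esperet--Giocanti and is not proved in this paper; the closest comparison is the proof of the stronger Theorem~\ref{thm_main} in Section~\ref{sec_proof}, and your proposal in fact aims at that stronger statement. Measured against it, your outline has two genuine gaps, both located exactly where you write ``granting this''. First, the available structure theorem (Theorem~\ref{thm_EGL}) does \emph{not} give a canonical \td\ of adhesion at most~$3$ whose torsos are all finite or planar: it gives torsos that are planar \emph{or of bounded \tw}, and an infinite \qt\ torso of bounded \tw\ need not be planar. You cannot repair this by refining such a torso into finite pieces, because the refined adhesion sets then have size up to the width bound, destroying the ``adhesion at most $3$'' hypothesis your amalgamation needs. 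The paper instead replaces each such torso by its (planar, tree) decomposition tree $T_t$, which is only \qi\ to the torso --- this is why the argument produces a \qiy\ rather than a clique-sum presentation.

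Second, and more seriously, the planarity of your amalgamated graph $H$ does not follow. Identifying copies of a size-$3$ adhesion set $S$ across the parts containing it produces $K_{3,3}$ (hence non-planarity) as soon as three pieces each contribute a component completely attached to~$S$; and even with two pieces, a tight $3$-separator of a planar torso typically has one completely attached component inside the triangle and one outside, so $S$ bounds a face of the embedding on \emph{neither} side. Your assertion that the adhesion cliques can be arranged, $\Aut(G)$-equivariantly, to bound faces is therefore not just a technicality: it is false as stated for the decompositions one actually obtains. The paper's proof is designed around precisely this obstruction: it never identifies adhesion sets, but inserts a hub vertex $x_S$ for each adhesion set, prunes the finite completely attached component so that exactly one component of each piece minus $S$ sees all of~$S$ (using that planarity forbids three such components), and lets the $x_S$ be cutvertices of~$H$. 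If you want to salvage your tree-amalgamation picture, you need some such modification; as written, $H$ need not be planar. The remaining points (reduction to the connected case, local finiteness and quasi-transitivity of~$H$, and the \v{S}varc--Milnor-type distance estimate) are essentially fine, though the paper's construction makes the \qiy\ a $(1,B)$-\qiy\ for free rather than requiring a quasi-isometric-embedding argument for parts.
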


Esperet and Giocanti proved their theorem as a first step towards a more general conjecture by Georgakopoulos and Papasoglu \cite{GP-MinorsMetricSpaces}.
In order to state their conjecture, let us introduce the notion of asymptotic minors.

For $K\in\N$, a graph $H$ is a \emph{$K$-fat minor} of a second graph $G$ if there exists a family $(B_v)_{v\in V(H)}$ of connected subsets of $V(G)$ and a family $(P_e)_{e\in E(H)}$ of paths in~$G$ such that
\begin{enumerate}[(1)]
\item for all $uv\in E(H)$, the path $P_{uv}$ intersects $\bigcup_{w\in V(H)} B_w$ in exactly its end vertices, one of which lies in~$B_u$, the other in~$B_v$,
\item $d(P_{uv},B_w)\geq K$ for all $uv\in E(H)$ and $w\in V(H)\sm\{u,v\}$,
\item $d(B_u,B_v)\geq K$ for all distinct $u,v\in V(H)$, and
\item $d(P_e,P_{e'})\geq K$ for all distinct $e,e'\in E(H)$.
\end{enumerate}
We call $H$ an \emph{asymptotic minor} of~$G$ if for every $K > 0$, $H$ is a $K$-fat minor of~$G$.
Now we can state Georgakopoulos' and Papasolgu's conjecture.

\begin{conj}\cite[Conjecture 9.3]{GP-MinorsMetricSpaces}\label{conj_GP}
Let $G$ be a \lf\ transitive graph.
Then either $G$ is \qi\ to a planar graph, or it contains every finite graph as an asymptotic minor.
\end{conj}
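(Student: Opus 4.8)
The plan is to derive Conjecture~\ref{conj_GP} from Theorem~\ref{thm_EG} by splitting on whether $G$ contains $K_\infty$ as a minor. Since every \lf\ transitive graph either does or does not, this case split is exhaustive and so proves the stated dichotomy. If $G$ does \emph{not} contain $K_\infty$ as a minor, then, a transitive graph being in particular \qt, Theorem~\ref{thm_EG} applies and shows that $G$ is \qi\ to a planar graph; this is the first alternative. The whole difficulty is therefore concentrated in the converse situation, and the bulk of the work is to prove:
\begin{quote}\em
if a \lf\ transitive graph $G$ contains $K_\infty$ as a minor, then $G$ contains every finite graph as an asymptotic minor.
\end{quote}
I also expect the two alternatives to be mutually exclusive, giving a genuine dichotomy: asymptotic minors are preserved by \qis\ (after enlarging the parameter~$K$), while a planar graph has no $K_5$ minor at all and hence cannot contain $K_5$ as an asymptotic minor; so a graph \qi\ to a planar graph omits some finite asymptotic minor. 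This exclusivity is not needed for the proof, only for its interpretation.

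For the main implication I would first reduce to complete graphs. Every finite graph $H$ is a minor of some~$K_n$, and one checks that a $K$-fat model of $H$ can be extracted from a $K'$-fat $K_n$-model once $K'$ is large enough in terms of $K$ and~$n$, by realizing the edge-contractions of $K_n$ through absorbing the corresponding connecting paths into branch sets. Thus it suffices to produce, for every $n$ and every $K$, a $K$-fat $K_n$-minor in~$G$. A first observation constrains the given $K_\infty$-minor: because $G$ is \lf, any branch set adjacent to infinitely many others must itself be infinite, so the $K_\infty$-model consists of infinite, connected, pairwise adjacent branch sets $B_1,B_2,\dots$. These infinite branch sets, rather than any finite complete minor, are the resource that makes fattening possible: a finite $K_n$-minor lives in a bounded region and cannot be spread apart, whereas the $B_i$ reach out to infinity.

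The construction of a $K$-fat $K_n$-minor would then carve $n$ well-separated connected pieces $\widetilde B_1,\dots,\widetilde B_n$ out of $n$ of the branch sets and route the $\binom{n}{2}$ connecting paths through long detours into the infinite parts of the $B_i$, using local finiteness (balls are finite, so distance $\ge K$ is forced by going far enough out) and transitivity (the same configuration occurs everywhere, so one can always translate into fresh, unused room) to meet the four separation requirements simultaneously. This is exactly where I expect the main obstacle to lie: keeping each $\widetilde B_i$ connected while guaranteeing conditions~(2)--(4), that is, ensuring that the quadratically many connecting paths stay pairwise at distance $\ge K$ and at distance $\ge K$ from every non-incident branch set. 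I would attempt this by an inductive, diagonal peeling argument that, for each path in turn, retreats deep into the infinite branch sets to select a fresh segment disjoint from and far away from everything chosen so far. Making such a peeling argument close uniformly in~$K$ is the crux of the whole proof, and plausibly the reason the conjecture has resisted resolution beyond the $K_\infty$-minor free case settled by Theorem~\ref{thm_EG}.
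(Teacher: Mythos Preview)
The statement you are attempting to prove is Conjecture~\ref{conj_GP}, which the paper records as an \emph{open conjecture} of Georgakopoulos and Papasoglu; the paper does not prove it and contains no proof to compare your proposal against. Theorem~\ref{thm_EG} (and the paper's own Theorem~\ref{thm_main}) settle only the $K_\infty$-minor-free half of your proposed case split, and the paper explicitly presents this as ``a first step towards'' the conjecture, not a resolution of it.

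Your proposal is therefore not a proof but a strategy, and you yourself flag the genuine gap: the implication ``$G$ has a $K_\infty$ minor $\Rightarrow$ $G$ has every finite graph as an asymptotic minor'' is the entire content of the conjecture beyond what is already known, and your peeling sketch does not establish it. Two concrete weaknesses in the sketch are worth naming. First, your appeal to transitivity (``translate into fresh, unused room'') is not directly usable: an automorphism moves the whole $K_\infty$ model, so you cannot use it to relocate a single connecting path while fixing the branch sets already chosen; what you actually need is room \emph{inside} the given infinite branch sets, and it is unclear why a single infinite connected set in a transitive graph must be wide enough to host many well-separated attachment points. Second, the reduction to complete graphs is easier than you suggest (a $K$-fat $K_n$ yields a $K$-fat $H$ for any $H$ on $n$ vertices simply by discarding unused edge-paths; no enlargement of $K$ is needed), so this step is not where any real difficulty lies. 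The crux remains exactly where you locate it, and it is open.
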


The obvious question regarding Conjecture~\ref{conj_GP} is whether we can ask the planar graph to be transitive, too.
Indeed, Esperet and Giocanti \cite[Section 6]{EG-QTGraphsWithoutMinor} raised the problem whether the planar graph in their theorem can be asked to be \qt, too.
We will prove that this is possible.
That is, we will prove the following theorem.

\begin{thm}\label{thm_main}
Every \lf\ \qt\ graph that does not contain $K_\infty$ as a minor is \qi\ to some planar \qt\ \lf\ graph.
\end{thm}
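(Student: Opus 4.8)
The plan is to run the structure theory that underlies Theorem~\ref{thm_EG} and then to reassemble the resulting pieces in a way that preserves the symmetry. The starting point is a canonical tree-decomposition of~$G$: by the accessibility and structure theory of $K_\infty$-minor-free \qt\ graphs, as developed in \cite{EG-QTGraphsWithoutMinor} and the accessibility literature, $G$ admits an $\Aut(G)$-invariant \td\ $(\cT,\cV)$ of finite adhesion with only finitely many $\Aut(G)$-orbits of parts and of adhesion sets, such that every torso is a \lf\ \qt\ graph that is either planar or of bounded \tw\ (and a \lf\ \qt\ graph of bounded \tw\ is \qi\ to a \lf\ \qt\ tree, hence to a planar \lf\ \qt\ graph). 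By refining $(\cT,\cV)$ if necessary we may moreover assume that each torso is finite or $3$-connected, so that the planar torsos carry an essentially unique embedding on which the stabiliser of the corresponding node of~$\cT$ acts.

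Next I would replace each torso by a planar \lf\ \qt\ model of it, carried out equivariantly for the stabiliser in $\Aut(G)$ of the corresponding node of~$\cT$ and compatibly with the way the torso meets its adhesion sets; as there are only finitely many orbits, this amounts to finitely many choices, each of which can be made equivariant. A planar torso is kept, a torso \qi\ to a tree is replaced by such a tree. Reassembling the modified torsos along~$\cT$ according to the original pattern of identifications of adhesion sets yields a graph~$G'$. That $G'$ is \lf\ and \qt\ is then formal: a tree amalgamation of finitely many \lf\ \qt\ graphs along finite adhesion sets, performed equivariantly, is \lf\ and \qt. That $G'$ is \qi\ to~$G$ expresses that a \td\ of finite adhesion with finitely many orbits is, up to \qiy, unchanged when each torso is replaced by a \qi\ graph in an adhesion-compatible way; this is a standard lemma on quasi-isometries between tree amalgamations.

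The remaining point, and the one I expect to be the main obstacle, is to arrange that $G'$ is \emph{planar}: a tree amalgamation of planar graphs need not be planar, so the reconstruction above must be set up carefully. Concretely, I would first refine $(\cT,\cV)$ so that every adhesion set has size at most~$3$; then show that, after a bounded local modification of each torso near its adhesion sets --- subdivisions together with bounded planar gadgets, which change neither the quasi-isometry type nor the quasi-transitivity of the torso --- every adhesion set becomes facial in the (essentially unique) embedding of its torso; and finally choose the identifications so that the rotation systems of adjacent torsos agree along the shared facial adhesion sets, so that the embeddings of the torsos glue up to an embedding of~$G'$ in the plane. This last step is essentially the known description of planar \qt\ graphs as face-respecting tree amalgamations of finite and $3$-connected planar \qt\ graphs, run in reverse; making the reduction to facial adhesion sets work while keeping the whole construction $\Aut(G)$-equivariant is the technical heart of the argument. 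Once $G'$ is known to be planar, \lf, \qt\ and \qi\ to~$G$, Theorem~\ref{thm_main} follows.
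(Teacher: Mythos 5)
Your overall skeleton matches the paper's: start from a canonical \td\ with only finitely many $\Aut(G)$-orbits whose torsos are planar or of bounded \tw\ (the paper uses Theorem~\ref{thm_EGL}, which gives adhesion at most~$3$ directly), replace each bounded-\tw\ torso by the decomposition tree of a canonical finite-width \td, and use the finiteness of the number of orbits to make all constants uniform. The divergence, and the gap, is in the final planarity step, which you correctly identify as the crux but resolve in a way that cannot work. You propose to keep the original identifications of the adhesion sets and to make each adhesion set facial with matching rotation systems in the embeddings of the torsos. But an adhesion set $S$ of size~$3$ may be shared by three or more parts of the \td, each contributing a component completely attached to~$S$; tightness of the edge-separations does not exclude this, and since $G$ need not be planar such configurations genuinely occur (think of graphs containing large $K_{3,n}$'s, which are $K_\infty$-minor free). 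Contracting three such components yields a $K_{3,3}$ minor in \emph{any} graph in which the three copies of~$S$ are identified, no matter how the torsos are embedded and no matter what bounded planar gadgets are inserted near~$S$; so no choice of rotation systems can make the amalgam planar. The ``face-respecting tree amalgamation'' description you want to run in reverse is a statement about graphs that are already planar, and it presupposes exactly the compatibility that fails here.

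The paper's fix is to abandon the identifications altogether: it takes \emph{disjoint} copies of the (modified) torsos and, for each adhesion set~$S$ of $(T,\cV)$, introduces a new apex vertex $x_S$ joined to the copy of~$S$ inside each planar torso containing it, and to a central vertex or central edge of the finite subtree representing~$S$ in each tree-replaced torso. Distinct torsos then meet only in the cutvertex $x_S$, and gluing planar graphs at a cutvertex preserves planarity. What remains is to check that each block ``torso plus apex over a triangle~$S$'' is itself planar; for this the paper uses that a planar torso $H_t$, having no $K_{3,3}$ minor, has at most two components of $H_t-S$ completely attached to~$S$, and it further refines the planar torsos (via a canonical \td\ distinguishing the $3$-distinguishable ends, followed by deleting certain finite components) so that exactly one completely attached component remains, after which adding the apex keeps the block planar. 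All of this is a bounded, automorphism-invariant perturbation, so quasi-transitivity and the \qiy\ to~$G$ survive. The change of gluing pattern --- from identification of adhesion sets to star-connection through new vertices --- is the idea missing from your argument.
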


This result indicates that a possible positive solution of the following problem might be expectable.

\begin{prob}\label{prob_QI}
If $G$ is a \qt\ \lf\ graph \qi\ to a planar graph, then is $G$ \qi\ to a \qt\ \lf\ planar graph?
\end{prob}

Another hint that this might be true is that MacManus~\cite{M-AccessPlanarQI} recently proved the following analogous statement for finitely generated groups.

\begin{thm}\label{thm_MacManus}\cite[Corollary D]{M-AccessPlanarQI}
The following are equivalent for every finitely generated group $G$.
\begin{enumerate}[\rm (1)]
\item $G$ is \qi\ to a planar graph.
\item $G$ is \qi\ to a planar Cayley graph.
\end{enumerate}
\end{thm}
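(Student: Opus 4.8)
The implication $(2)\Rightarrow(1)$ is immediate: a planar Cayley graph is in particular a planar graph, and a group is always \qi\ to each of its Cayley graphs, so $(2)$ directly produces a planar graph \qi\ to~$G$. The whole content lies in $(1)\Rightarrow(2)$. My plan is to feed a Cayley graph of~$G$ into Theorem~\ref{thm_main} and then to upgrade the quasi-transitive planar graph it returns into a genuine Cayley graph. Fix a finite generating set $S$ of~$G$ and let $X:=\mathrm{Cay}(G,S)$; then $X$ is connected, \lf, and $G$ acts on~$X$ freely and transitively, so $X$ is \qt, and by hypothesis $X$ is \qi\ to a planar graph~$P$.

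The first substantial step is to prove that $X$ contains no $K_\infty$ minor. Here I would exploit the coarse invariance packaged by the asymptotic minors from the introduction: for a \lf\ \qt\ graph, containing $K_\infty$ as a minor is equivalent to containing $K_n$ as an asymptotic minor for every~$n$. The nontrivial direction is that a $K_\infty$ minor forces asymptotic $K_n$ minors. The point is that in a \lf\ graph a $K_\infty$ minor must have \emph{infinite} branch sets — a finite connected branch set emits only finitely many edges and hence can meet only finitely many other branch sets — and quasi-transitivity then lets one translate and spread pieces of such a model by group elements to obtain, for each~$n$ and each~$K$, a $K$-fat $K_n$ minor. Granting this, suppose $X$ had a $K_\infty$ minor; then $X$ would contain $K_5$ as an asymptotic minor, and since asymptotic-minor containment is preserved by \qis\ and $X$ is \qi\ to the planar graph~$P$, the graph~$P$ would contain $K_5$ as an asymptotic minor, hence as an ordinary minor — contradicting planarity. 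Thus $X$ is $K_\infty$-minor free.

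Now $X$ is a \lf\ \qt\ graph without a $K_\infty$ minor, so Theorem~\ref{thm_main} supplies a planar \qt\ \lf\ graph~$Y$ with $X$ \qi\ to~$Y$, and hence $G$ \qi\ to~$Y$. It remains to replace the merely quasi-transitive~$Y$ by a Cayley graph, i.e.\ to produce a planar graph \qi\ to~$Y$ that carries a group acting freely and transitively. I would analyse $A:=\Aut(Y)$: by local finiteness and quasi-transitivity $A$ is a compactly generated, totally disconnected, locally compact group acting properly and cocompactly on~$Y$, so $A$ is \qi\ to~$Y$; moreover $G$ is finitely presented, being \qi\ to the planar \qt\ graph~$Y$, so Dunwoody's accessibility is available. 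Because $Y$ is planar, $A$ is closely related to a planar discontinuous group, whose structure is controlled by a maximal decomposition over finite edge groups into one-ended pieces that are virtually surface groups together with finite pieces. From this I would extract a finitely generated discrete subgroup $\Gamma\le A$ acting freely and cocompactly on a planar graph, so that $\Gamma$ is \qi\ to~$A$ and hence to~$G$ and its Cayley graph is planar; this is the desired graph in~$(2)$.

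The main obstacle is precisely this last upgrade from a cocompact action to a free one. A general totally disconnected locally compact group need not contain a discrete cocompact subgroup, let alone one acting freely, so one cannot simply extract a lattice from $\Aut(Y)$, and the planarity of~$Y$ has to be used in an essential way. Concretely, the delicate work is to combine the planar structure with accessibility so as to realise the coarse planar geometry of~$G$ by a group acting with trivial vertex stabilisers — that is, to build a planar \emph{Cayley} graph rather than only a planar quasi-transitive graph. Verifying that the pieces of the accessibility decomposition inherit compatible planar embeddings, and that the reassembled graph remains planar, is where I expect the bulk of the effort to go.
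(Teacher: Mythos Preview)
This theorem is not proved in the paper; it is quoted from MacManus \cite{M-AccessPlanarQI} as external evidence bearing on Problem~\ref{prob_QI}. There is therefore no proof here against which to compare your proposal.

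On the merits of the proposal, there are two genuine gaps. First, your argument that the Cayley graph $X$ has no $K_\infty$ minor is not justified by the sketch you give. The assertion that a $K_\infty$ minor in a locally finite quasi-transitive graph forces $K_n$ as an asymptotic minor for every $n$ does not follow from ``translate and spread by group elements'': automorphisms move the entire $K_\infty$ model rigidly and do nothing to increase the pairwise distances between branch sets or between the connecting paths, so no mechanism for producing a $K$-fat model is exhibited. This implication lies close to the territory of Conjecture~\ref{conj_GP} and is not a formality.

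Second, and more seriously, your step~5 is the entire content of MacManus's theorem, and you do not prove it. Passing through Theorem~\ref{thm_main} upgrades the arbitrary planar graph $P$ to a quasi-transitive planar $Y$, but you are then left with precisely the problem MacManus solves: producing from $\Aut(Y)$ a finitely generated group acting \emph{freely} and cocompactly on a planar graph. Your outline (accessibility, planar discontinuous groups, virtually surface pieces) names the ingredients without assembling them, and, as you yourself observe, a compactly generated totally disconnected locally compact group need not contain a cocompact lattice, so one cannot simply pass to a discrete subgroup. Showing that planarity makes this possible is the substance of \cite{M-AccessPlanarQI}, not a corollary of Theorem~\ref{thm_main}.
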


Furthermore, he proved a structural result for \qt\ \lf\ graphs that are \qi\ to planar graphs, see \cite[Corollary C]{M-AccessPlanarQI}, in terms of canonical \td s: the parts are either finite or \qi\ to complete Riemannian planes.
We refer to Section~\ref{sec_prelim} for the definition of (canonical) \td s.
This structural result might be useful for Problem~\ref{prob_QI}.

\section{Preliminaries}\label{sec_prelim}

Let $G$ be a graph.
A \emph{\td} of~$G$ is a pair $(T,\cV)$ of a tree~$T$, the \emph{decomposition tree}, and a family $\cV=(V_t)_{t\in V(T)}$ of vertex sets of~$G$, one for every $t\in V(T)$, such that
\begin{enumerate}[(T1)]
\item $V(G)=\bigcup_{v\in V(T)}V_t$,
\item for every $e\in E(G)$ there exists $t\in V(T)$ with $e\sub V_t$, and
\item $V_{t_1}\cap V_{t_2}\sub V_{t_3}$ for all $t_3$ on the $t_1$-$t_2$ path in~$T$.
\end{enumerate}
The sets $V_t$ are the \emph{parts} of the \td\ and the intersection $V_{t_1}\cap V_{t_2}$ for adjacent $t_1$ and $t_2$ are the \emph{adhesion sets}.
The \emph{adhesion} of $(T,\cV)$ is the supremum of the sizes of the adhesion sets.
The \emph{width} of $(T,\cV)$ is $\sup_{t\in V(T)}|V_t|-1$, seen as an element of $\N\cup\{\infty\}$, if all $V_t$ are finite and $\infty$ otherwise.
The \emph{\tw} of~$G$ is the minimum width among all \td s of~$G$.

If the automorphism group of~$G$ induces an action on the family $\cV$ and thereby also an action on~$T$ then we call the \td\ \emph{canonical}.

If $V_t$ is a part of $(T,\cV)$, then the subgraph of~$G$ induced by~$V_t$ together with all (possibly new) edges $uv$ for all distinct $u,v$ that lie in a common adhesion set in~$V_t$ is a \emph{torso} of $(T,\cV)$.

A \emph{separation} of~$G$ is a pair $(A,B)$ with $A,B\sub V(G)$ such that $A\cup B= V(G)$ and such that $e\sub A$ or $e\sub B$ for all edges of~$G$.
We call $|A\cap B|$ its \emph{order}.
The separation is \emph{tight} if there are components $C_A$ in $A\sm B$ and $C_B$ in $B\sm A$ with $N(C_A)=A\cap B=N(C_B)$.

For a \td\ $(T,\cV)$ and an edge $e\in E(T)$, the \emph{edge-separation} of~$e$ is the separation
\[
(\bigcup_{t\in V(T_1)}V_t,\bigcup_{t\in V(T_2)}V_t),
\]
where $T_1$ and $T_2$ are the two components of $T-e$.

The following result by Thomassen and Woess \cite[Corollary 4.3]{ThomassenWoess} was stated for transitive graphs, but its proof carries over almost verbatim to \qt\ graphs.

\begin{lem}\label{lem_TW4.3}\cite[Corollary 4.3]{ThomassenWoess}
Let $G$ be a connected \qt\ \lf\ graph and let $k\in\N$.
Then there are only finitely many $\Aut(G)$-orbits of tight separations of order~$k$.
\end{lem}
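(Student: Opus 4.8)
The proof of Lemma~\ref{lem_TW4.3} follows the strategy of Thomassen and Woess, adapted to the \qt\ setting. We want to bound the number of $\Aut(G)$-orbits of tight separations of order~$k$.

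\medskip

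The plan is as follows. Let $(A,B)$ be a tight separation of order~$k$, and let $C_A$ be a component of $A\sm B$ with $N(C_A)=A\cap B$. Fix a set $R$ of representatives of the (finitely many, by \qt ivity) $\Aut(G)$-orbits on $V(G)$. Since $G$ is connected and \lf, for each $r\in R$ there are only finitely many vertices within any fixed distance of~$r$. The first step is to show that some orbit-representative $r$ can be moved by an automorphism into a controlled position relative to $A\cap B$: more precisely, I would argue that the separator $S:=A\cap B$ can be assumed (after applying an automorphism) to lie within bounded distance of a fixed finite set. The key point is that $S$ induces, via the tightness condition, a connected set $C_A$ whose neighbourhood is exactly~$S$; pick a vertex $s\in S$, and note $s$ has a neighbour in $C_A$ and (by tightness on the other side, or because $S=N(C_B)$) the whole of $S$ is "pinned down" by $C_A$. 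Translating by an automorphism taking $s$ into $R$, we may assume $s$ is one of finitely many vertices.

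\medskip

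Next, since $S$ has $k$ elements and $S=N(C_A)$ with $C_A$ connected, the set $S$ is contained in a ball of radius depending only on~$k$ around~$s$: indeed any two vertices of $S$ are joined by a path through $C_A$, but that alone does not bound the distance; instead one uses that $C_A\cup S$ being connected with $N(C_A)=S$ forces, together with local finiteness, only finitely many candidates for $S$ once we also know that $S$ separates $G$ in a tight way — here one invokes that the number of tight separations with a given separator on one specified side is controlled. The cleanest route: fix $s\in S$ in the finite set $R$; there are finitely many sets $S\ni s$ of size $k$ that can arise as $N(C)$ for a component $C$ of $G-S$ (this is where \lf ness and the structure of $G$ near $s$ enter — but in general $S$ need not be bounded, so one must be more careful and use that $G$ is \qt). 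This is the step I expect to be the main obstacle: controlling the diameter of the separator $S$, or more precisely showing that up to automorphism there are finitely many possibilities for the pair $(S, \text{side})$. Thomassen and Woess handle this by a compactness/finiteness argument specific to their setting; replicating it requires that the stabilizer of a vertex acts with finitely many orbits on the relevant local structure, which follows from \qt ivity plus local finiteness.

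\medskip

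Finally, once we know that, up to the action of $\Aut(G)$, there are only finitely many possibilities for the separator $S=A\cap B$ together with the designated side, the separation $(A,B)$ itself is determined: $A\sm B$ is the union of $S$-components on the $A$-side. Since tightness forces $C_A$ (with $N(C_A)=S$) to determine which components lie in $A\sm B$ — namely those "reachable" consistently with the tight structure — and there are finitely many choices of how to partition the components of $G-S$ into the two sides, we conclude that there are only finitely many $\Aut(G)$-orbits of tight separations of order~$k$. I would then remark that this is exactly the content of \cite[Corollary 4.3]{ThomassenWoess} and that the passage from transitive to \qt\ changes nothing essential, since every place where vertex-transitivity is used can be replaced by "finitely many vertex orbits".
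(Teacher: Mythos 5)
There is a genuine gap here, though your overall skeleton is the right one. First, note that the paper does not reprove this lemma at all: it quotes \cite[Corollary 4.3]{ThomassenWoess} and merely observes that the only place transitivity enters is in reducing to finitely many vertex orbits, so the argument carries over verbatim to the quasi-transitive case. Your closing remark captures exactly that observation, and your reduction is correct as far as it goes: since $G$ is locally finite and connected, a finite separator $S$ leaves only finitely many components of $G-S$ (each component has a neighbour in $S$, and $G$ has bounded degree), so there are only finitely many tight separations with a given separator; hence it suffices to show that, for each of the finitely many orbit representatives $s$, only finitely many tight separators of order $k$ contain $s$.

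That last statement, however, is the entire content of the Thomassen--Woess result (their Proposition 4.2: in a connected locally finite graph, a fixed vertex lies in the separator of only finitely many tight separations of order at most $k$), and you do not prove it --- you explicitly flag it as ``the main obstacle'' and gesture at two routes that do not work as stated. Bounding the diameter of $S$ around $s$ is not available a priori: the components $C_A$ and $C_B$ witnessing tightness may be arbitrarily large, so $S=N(C_A)$ need not lie in any fixed ball around $s$, and a posteriori boundedness of the diameters would be a consequence of the lemma, not an ingredient. Likewise, ``there are finitely many sets $S\ni s$ of size $k$ arising as $N(C)$'' is precisely the claim to be established, not something that follows from local finiteness near $s$. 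The actual proof is a non-local combinatorial argument exploiting tightness on both sides (a pigeonhole/induction on $k$, starting from the fact that $C_A$ and $C_B$ each contain a neighbour of $s$), and it uses no transitivity whatsoever. Without that lemma, or at least a precise citation of it as the transitivity-free core, your write-up is a plan rather than a proof; with it, the quasi-transitive upgrade is indeed the one-line orbit-counting you describe.
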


The major tool in our proof of Theorem~\ref{thm_main} is the following result by Esperet et al.~\cite{EGLD-QTAvoidingMinor}.

\begin{thm}\label{thm_EGL}\cite[Theorem 4.3]{EGLD-QTAvoidingMinor}
Let $G$ be a \qt\ \lf\ graph without $K_\infty$ as a minor and let $\Gamma$ be a group acting \qt ly on~$G$.
Then there exists $k\in\N$ and a $\Gamma$-invariant \td\ $(T, \cV)$ of adhesion
at most~$3$, and such that for every $t\in V(T)$ the torso of~$V_t$ is a minor of~$G$ that is either planar or has \tw\ at most~$k$ and such that $\Gamma_t$ acts \qt ly on that torso.
Furthermore, the edge-separations of $(T, \cV)$ are all tight.
\end{thm}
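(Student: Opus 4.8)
The plan is to produce the tree-decomposition in two stages: first a canonical decomposition into highly connected torsos of adhesion at most~$3$, and then a local analysis showing each torso is planar or of bounded \tw. First I would build a $\Gamma$-invariant \td\ $(T,\cV)$ of~$G$ all of whose separations have order at most~$3$ and which is maximally fine subject to this, so that no torso admits a further splitting along a separator of order~$\le 3$; equivalently, every torso has no separator of order at most~$3$, i.e.\ is essentially $4$-connected. Canonical decompositions of this kind are furnished by the theory of canonical nested separation systems separating along all low-order separations (the canonical tangle-distinguishing \td s); canonicity makes the decomposition $\Gamma$-invariant automatically. Since every such separation may be taken tight, Lemma~\ref{lem_TW4.3} yields only finitely many $\Aut(G)$-orbits, hence only finitely many $\Gamma$-orbits, of them; thus $\Gamma$ acts on~$T$ with finitely many orbits of vertices and edges, and each stabiliser $\Gamma_t$ acts \qt ly on~$V_t$ and therefore on the torso at~$t$. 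The adhesion bound and the tightness of the edge-separations are then immediate from the construction.

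The heart of the argument is the local dichotomy. Each torso~$H$ is a minor of~$G$ (hence itself \lf\ and $K_\infty$-minor-free), is highly connected, and carries a \qt\ action of~$\Gamma_t$; I claim every such~$H$ is planar or has bounded \tw. I would first reduce to the case that~$H$ excludes some \emph{finite} clique $K_t$ as a minor, i.e.\ has bounded Hadwiger number. Granting this, the graph-minor structure theorem of Robertson and Seymour applies and decomposes~$H$ along bounded-order separators into pieces that are almost-embeddable, after deleting boundedly many apex vertices and adding boundedly many bounded-depth vortices, into a surface of bounded genus. Two features of the present setting then collapse this to the desired dichotomy: an infinite \lf\ \qt\ graph cannot occupy a fixed surface of positive genus without accumulation, so any genuinely infinite highly connected piece embeds in the plane, while the bounded apex, vortex, and genus data pin down the remaining pieces to bounded size; reassembling the pieces across the adhesion-$\le 3$ separators one concludes that~$H$ is planar or has bounded \tw. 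The uniformity of the bound~$k$ over all torsos follows from the finiteness of the number of torso-orbits established above.

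The step I expect to be the main obstacle is precisely the reduction to a finite excluded clique, that is, the passage from the \emph{finite} minors controlled by the structure theorem to the single \emph{infinite} excluded minor $K_\infty$. Concretely, one must show that a highly connected \qt\ torso with $K_t$ minors for every~$t$ must contain $K_\infty$ as a minor, contradicting the hypothesis. The natural route is a compactness argument: using the \qt\ action one translates a $K_t$ minor so as to sit in a prescribed region along a ray, arranges these minors of growing order coherently, and then extracts, via a König's-lemma limit exploiting local finiteness, a single infinite clique minor. The delicate point is that in a \lf\ graph an infinite clique minor necessarily has \emph{infinite} branch sets: the limiting construction must therefore produce infinitely many pairwise-adjacent \emph{disjoint infinite} connected branch sets, rather than merely a nested sequence of ever-larger finite cliques, and controlling the bounded-degree adjacencies so that the limit branch sets remain pairwise adjacent is where the real work lies.
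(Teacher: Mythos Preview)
The paper does not prove this theorem: it is quoted verbatim as \cite[Theorem~4.3]{EGLD-QTAvoidingMinor} and used as a black box (``The major tool in our proof of Theorem~\ref{thm_main} is the following result by Esperet et al.''). There is therefore no proof in the present paper to compare your proposal against.

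That said, your outline is broadly in the spirit of how such results are obtained, but several steps are not as automatic as you suggest. First, a canonical \td\ of adhesion at most~$3$ with $4$-connected torsos does not come for free from generic tangle-distinguishing machinery in the infinite setting; one has to work with the Tutte/Grohe-style decomposition for \lf\ graphs and check $\Gamma$-invariance and tightness carefully. Second, the Robertson--Seymour structure theorem is a statement about \emph{finite} graphs; invoking it directly on an infinite torso is not legitimate, and the route in \cite{EGLD-QTAvoidingMinor} instead combines quasi-transitivity with Thomassen's results on the Hadwiger number of infinite vertex-transitive graphs \cite{T-Hadwiger} to control the torsos. Third, and as you yourself flag, the reduction from ``no $K_\infty$ minor'' to ``no $K_t$ minor for some fixed~$t$'' is genuinely the crux: it fails for general \lf\ graphs and requires the \qt\ hypothesis in an essential way. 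Your K\"onig-type sketch is the right intuition, but turning it into a proof that the limiting branch sets are disjoint, infinite, connected, and pairwise adjacent is exactly the content of Thomassen's work, not a routine compactness step.
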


One-way infinite paths are \emph{rays} and two rays in a graph $G$ are \emph{equivalent} if, for every finite vertex set $S\sub V(G)$, both rays have all but finitely many vertices in the same component of $G-S$.
This is an equivalence relation whose equivalence classes are the \emph{ends} of~$G$.
An end is \emph{thick} if it contains infinitely many pairwise disjoint rays and it is \emph{thin} otherwise.
By a result of Halin~\cite{H-Endengrad}, for every thin end, there exists $n\in\N$ such that there are $n$ but not $n+1$ pairwise disjoint rays in that end.

Two ends are \emph{$k$-distinguishable} for some $k\in\N$ if there exists a vertex set $S$ of size at most~$k$ such that no component of $G-S$ contains all but finitely many vertices from rays from both ends.
A \td\ \emph{distinguishes} two ends \emph{efficiently} if there is an edge-separation $(A,B)$ such that all rays from one of the ends lie eventually in~$A$, all rays from the other end lie eventually in~$B$ and the ends are not $(|A\cap B|-1)$-distinguishable.
A \qt\ graph is \emph{accessible} if there exists $n\in\N$ such that every two ends are $n$-distinguishable.

The following is a special case of \cite[Theorem 7.3]{CHM-CanonicalTTD}.

\begin{thm}\label{thm_canonicalTD}
Let $G$ be a locally finite graph and let $k\in\N$.
Let $\cE$ be a set of ends of~$G$ that are pairwise $k$-distinguishable.
Then there is a canonical \td\ distinguishing all end in~$\cE$ efficiently.
\end{thm}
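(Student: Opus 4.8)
The plan is to realize the ends of $\cE$ as profiles of a submodular separation system and then to specialize the canonical tree-of-tangles machinery of \cite[Theorem 7.3]{CHM-CanonicalTTD}, of which this statement is an instance. Let $S_k$ be the system of all separations of $G$ of order at most~$k$, where an orientation of $\{A,B\}$ means a choice of $(A,B)$ or $(B,A)$. Since $G$ is \lf, the order function $\{A,B\}\mapsto|A\cap B|$ is submodular, so $S_k$ is a submodular separation system. First I would check that every end $\omega\in\cE$ induces a \emph{profile} $P_\omega$ of $S_k$: for each $\{A,B\}$ of order at most~$k$ all rays of $\omega$ have a tail on exactly one side, and orienting towards that side gives a consistent orientation; moreover, if $\omega$ orients two separations as $(A_1,B_1)$ and $(A_2,B_2)$, then a tail of every ray of $\omega$ lies in $B_1\cap B_2$, so $\omega$ points towards $B_1\cap B_2$ and not towards the corner $A_1\cup A_2$, which is exactly the profile condition.

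Next I would translate the hypothesis and the target through this dictionary. Two ends $\omega,\omega'$ are $j$-distinguishable exactly when some separation of order at most~$j$ is oriented oppositely by $P_\omega$ and~$P_{\omega'}$; hence pairwise $k$-distinguishability of $\cE$ says that the profiles $\{P_\omega:\omega\in\cE\}$ are pairwise distinguishable in $S_k$, and the least order of a separation distinguishing a given pair equals the least $j$ for which the two ends are $j$-distinguishable. Distinguishing a pair of ends \emph{efficiently} therefore corresponds to distinguishing the two profiles by a separation of this minimum order.

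The core of the argument is now supplied by \cite[Theorem 7.3]{CHM-CanonicalTTD}: applied to the profiles induced by $\cE$, it yields an $\Aut(G)$-invariant nested set $N\sub S_k$ of separations such that every distinguishable pair $P_\omega,P_{\omega'}$ is distinguished by a member of~$N$ of minimum order. By the standard correspondence between nested separation systems and \td s, $N$ determines a \td\ $\TV$ whose edge-separations are precisely the separations in~$N$. Canonicity is immediate, since $\Aut(G)$ permutes~$N$ and hence acts on~$\TV$; and the minimum-order distinguishing of the profiles says precisely that for each pair from~$\cE$ there is an edge-separation $(A,B)$ with one end eventually in~$A$, the other eventually in~$B$, and the two ends not $(|A\cap B|-1)$-distinguishable. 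That is exactly the definition of distinguishing the ends of $\cE$ efficiently.

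I expect the main obstacle to be the two translation steps rather than the construction itself: verifying carefully that ends yield profiles of $S_k$ (in particular that the profile condition holds for corner separations whose order may exceed~$k$), and that the abstract notion of distinguishing profiles by a minimum-order separation coincides with the paper's definition of efficient distinguishing via edge-separations and non-$(|A\cap B|-1)$-distinguishability. The production of the canonical nested set is exactly what the cited theorem provides, so the work specific to this statement is confined to setting up $S_k$ correctly and checking these correspondences.
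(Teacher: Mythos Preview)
Your proposal is correct and matches the paper's approach: the paper does not give a standalone proof but simply records the theorem as a special case of \cite[Theorem 7.3]{CHM-CanonicalTTD}, and what you have written is precisely the routine verification that ends in~$\cE$ induce pairwise distinguishable profiles of the finite-order separation system so that the cited theorem applies. Your flagged obstacles (that ends really are profiles and that efficient distinguishing of profiles coincides with the paper's definition) are the only content beyond the citation, and both are standard.
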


While the following statement follows from results about factorisations and \ta s of \qt\ graphs, we offer here a proof that avoids most of the definitions that we would need, if we conclude it from~\cite[Theorem 7.5]{HLMR}.

\begin{thm}\label{thm_canonicalTDFinite}
Let $G$ be a locally finite graph of finite \tw.
Then there exists a canonical \td\ of finite width distinguishing all ends of~$G$ efficiently.
\end{thm}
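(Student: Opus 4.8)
The plan is to start from the fact that a locally finite graph $G$ of finite \tw\ is accessible: by a theorem of Thomassen and Woess, $k$ being a bound on the \tw\ implies (via small separations of order at most $k+1$) that any two ends are $(k+1)$-distinguishable. Thus the set $\cE$ of \emph{all} ends of $G$ consists of pairwise $k'$-distinguishable ends for $k'=k+1$, and Theorem~\ref{thm_canonicalTD} immediately yields a canonical \td\ $(T,\cV)$ distinguishing all ends of $G$ efficiently. The remaining task is to argue that this decomposition (or a suitable refinement of it) can be taken to have \emph{finite width}, not merely finite adhesion.

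Next I would analyse the parts of the \td\ obtained from Theorem~\ref{thm_canonicalTD}. Each torso still has finite \tw\ (\tw\ does not increase under taking torsos of decompositions into parts that are ``pieces'' of the original graph, since the added clique edges sit on adhesion sets of bounded size), but a single part $V_t$ could still be infinite. However, because the decomposition distinguishes \emph{all} ends efficiently, no part can contain two inequivalent ends, and an infinite part containing (the tail of) a single end would have to be ``thin'' in a controlled way; more robustly, one applies the construction again inside each infinite torso. Concretely, I would iterate: inside each infinite part, which is itself locally finite of finite \tw, take its own canonical \td\ distinguishing its ends efficiently, and glue these refinements into $(T,\cV)$ in an $\Aut(G)$-equivariant way — equivariance is preserved because we apply the \emph{same} canonical construction in every part of a given $\Aut(G)$-orbit. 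One then shows this process terminates, or passes to a limit, with all parts finite. The bound on the sizes of the parts comes from combining the bounded adhesion with the fact (Lemma~\ref{lem_TW4.3}) that there are only finitely many orbits of tight separations of each bounded order, so that the ``atoms'' cut out between nested such separations are finite and of bounded size.

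Alternatively — and this is probably the cleaner route — I would appeal directly to the structure of graphs of finite \tw: a locally finite graph of \tw\ at most $k$ has a canonical \td\ of adhesion at most $k$ all of whose torsos have at most $k+1$ vertices if and only if it has no ``infinite part'' obstruction, and one shows the efficient end-distinguishing decomposition can be chosen so that every part is a finite torso by intersecting with a canonical \td\ of bounded adhesion into finite-or-``$3$-connected-like'' pieces. Here I would use that the edge-separations can be taken tight (so Lemma~\ref{lem_TW4.3} applies), take the common refinement of the end-distinguishing \td\ with a canonical \td\ into tight separations of all orders up to $k+1$, check that common refinements of canonical \td s are canonical, and verify that the resulting parts are finite: any infinite part would yield an end not separated off, contradicting efficiency, or an infinite nested family of tight separations of bounded order in one orbit, contradicting local finiteness.

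The main obstacle I expect is the bookkeeping needed to keep everything \emph{canonical} while refining: common refinements of \td s are delicate, and one must ensure that the refinement step applied orbit-wise inside infinite parts genuinely produces an $\Aut(G)$-invariant family and that the iteration stabilises with finite parts. A secondary subtlety is extracting a uniform bound on $|V_t|$ (finite width, not just all parts finite) — this is where Lemma~\ref{lem_TW4.3} is essential, turning ``finitely many orbits'' into an actual numerical bound via local finiteness and \qt ivity. I would organise the proof so that these two points are isolated as short lemmas, with the topological input (accessibility from finite \tw, existence of the efficient decomposition) quoted directly from Theorem~\ref{thm_canonicalTD}.
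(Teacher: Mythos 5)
Your first step agrees with the paper: finite \tw\ gives accessibility (the paper obtains this by noting that $G$ is \qi\ to a decomposition tree of a width-$k$ \td, which moreover shows that \emph{all ends of $G$ are thin}), and Theorem~\ref{thm_canonicalTD} then yields a canonical \td\ distinguishing all ends efficiently; after discarding edges whose separations distinguish no pair of ends, the adhesion is bounded, the edge-separations are tight, and Lemma~\ref{lem_TW4.3} gives finitely many orbits on the decomposition tree, so that ``all parts finite'' automatically upgrades to ``finite width''. Up to here you are on the paper's track.

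The gap is precisely in the step you yourself identify as the remaining task: showing that the parts are finite. Neither of your two routes closes it. The iteration/refinement scheme is left entirely open (``one then shows this process terminates, or passes to a limit''), and no reason is offered why the iteration should terminate or why a limit would again be a canonical \td\ with finite parts. The ``cleaner route'' rests on the claim that an infinite part ``would yield an end not separated off, contradicting efficiency''; this is false: an infinite connected locally finite torso contains at least one end, and a part containing exactly \emph{one} end of $G$ is perfectly compatible with the \td\ distinguishing all ends. The paper's actual argument here is short but uses two inputs absent from your proposal: (a) since the \td\ distinguishes all ends, an infinite part carries a unique end, and its torso is a one-ended \qt\ graph (the stabiliser of the part acts \qt ly on the torso); (b) by a theorem of Thomassen, the end of a one-ended locally finite \qt\ graph is thick --- contradicting the fact, recorded in the first step, that all ends of a graph of finite \tw\ are thin. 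Without this thick/thin dichotomy (or some substitute for it) your argument does not establish finiteness of the parts, and hence not finite width.
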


\begin{proof}
Since graphs of finite \tw\ $k\in\N$ are \qi\ to any decomposition tree of a \td\ of width~$k$ and since accessibility is preserved by \qis, those graphs are accessible and all ends are thin.
So let $(T,\cV)$ be a canonical \td\ distinguishing all ends of~$G$ efficiently.
We may assume that every edge-separation distinguishes some pair of ends efficiently.
In particular, there is an upper bound on the adhesion sets.
By Lemma~\ref{lem_TW4.3}, there are only finitely many orbits of tight separations of bounded order.
Thus, there are only finitely many orbits on~$E(T)$ and hence on~$V(T)$.
If we show that all parts are finite, then this implies that the \td\ has finite width.
So let us suppose that some part is infinite.
Since $(T,\cV)$ distinguishes all ends, there is a unique end in this part\footnote{An end $\omega$ \emph{lies} in a part~$V_t$ if some ray $R\in\omega$ meets $V_t$ infinitely often.} and hence also in this torso.
Since the stabiliser of that part acts \qt ly on the torso by a results of Esperet and Giocanti \cite[Lemma 3.13]{EGLD-QTAvoidingMinor}, it is a one-ended \qt\ graph.
By a result of Thomassen \cite[Proposition 5.6]{T-Hadwiger}, this end must be thick, a contradiction since all ends are thin.
Thus, all parts are finite, which finishes the proof as mentioned above.
\end{proof}

For a finite tree~$T$, we call a vertex of~$T$ \emph{central} if it is the middle vertex of a longest path in~$T$.
Similarly, an edge of~$T$ is \emph{central} if it is the middle edge of a longest path in~$T$.
Note that every finite tree has either a central vertex or a central edge and that this is always fixed the automorphism group of the tree.

\section{Proof of Theorem~\ref{thm_main}}\label{sec_proof}

Let $G$ be a \qt\ \lf\ graph that omits $K_\infty$ as a minor.
By Theorem~\ref{thm_EGL}, there exist $k\in\N$ and a canonical \td\ $(T,\cV)$ of~$G$ of adhesion at most~$3$ such that the torsos are minors of~$G$ and each torso is either planar or has \tw\ at most~$k$ and such that the stabiliser of each torso acts \qt ly on that torso.
Furthermore, the edge-separations of $(T,\cV)$ are tight.
Thus, there are only finitely many orbits of them by Lemma~\ref{lem_TW4.3} and hence there are only finitely many $\Aut(G)$-orbits on $V(T)$.

We distinguish three types of torsos (finite torsos, infinite torsos of \tw\ at most~$k$ and infinite planar torsos) and prepare them for our final \qiy: we find for each torso of the first two kinds \qis\ to planar \qt\ \lf\ graphs and, in the last situation, we have to prepare them such that separations of order~$3$ whose separator is also an adhesion set in $(T,\cV)$ does not leave three distinct components.
We do this by adding additional separators of size~$1$.

If there are finite torsos, then there is an upper bound $B_1$ on the number of vertices in each such torso as there are only finitely many $\Aut(G)$-orbits on $V(T)$.
Thus, each of those torsos is $(1,B_1)$-\qi\ to a single vertex.

Let us now consider an infinite torso $H_t$ of \tw\ at most~$k$.
Since it is \lf, $H_t$ has a canonical \td\ of finite width.
Again, since there are only finitely many $\Aut(G)$-orbits on $V(T)$, there exists an upper bound $B_2$ on the width of the canonical \td s of such torsos.
Let $(T_t,\cV_t)$ be a canonical \td\ of~$H_t$ of width at most~$B_2$ distinguishing all ends and such that all of its edge-separations are tight, which exists by Theorem~\ref{thm_canonicalTDFinite}.
Since there are only finitely many orbits on $V(T_t)$ under the stabiliser of $H_t$ by the same argument that we have only finitely many $\Aut(G)$-orbits on $V(T)$, there exists an upper bound $B_3$ on the diameter of the parts of $(T_t,\cV_t)$ and an upper bound $B_4$ on the number of parts that contain a vertex~$v$.
Again, we may assume that these bounds $B_3$ and~$B_4$ hold for all torsos of this type, i.\,e.\ all infinite torsos of \tw\ at most~$k$.
Thus, any map that maps each vertex $u$ of~$H_t$ to some $s\in V(T_t)$ such that $u$ lies in the part of~$s$ is a $(1, B_3B_4)$-\qiy\ from $H_t$ to~$T_t$.
Since every adhesion set $S$ of $(T,\cV)$ in~$H_t$ is a clique and thus must lie in some common part of $(T_t,\cV_t)$, there exists a non-empty subtree $T_t^S$ of $T_t$ all of whose parts contain~$S$.
As all edge-separations of $(T_t,\cV_t)$ are tight, Lemma~\ref{lem_TW4.3} implies that every $T_t^S$ is finite.
So it has a central vertex $v_S$ or a central edge~$e_S$.

For every infinite planar torso $H_t$ and every adhesion set $S$ in~$H_t$ of size~$3$, there are at most two components $C$ of $H_t-S$ with $N(C)=S$, since $H_t$ is planar and thus does not contain $K_{3,3}$ as a minor.
Let $(T_t,\cV_t)$ be a canonical \td\ of adhesion at most~$3$ distinguishing all $3$-distinguishable ends of~$H_t$ such that all of its edge-separations are tight.
This exists by Theorem~\ref{thm_canonicalTD}.
We contract all edges whose edge-separations do not have one of the adhesion sets of size~$3$ from $(T,\cV)$ as separator and join their parts.
Thereby, we obtain a \td\ $(T_t',\cV_t')$ that has as adhesion sets only adhesion sets of size~$3$ that are also adhesion sets in $(T,\cV)$.
Note that the torsos are the subgraphs of~$H_t$ induced by the parts.
Let $G_s$ be a torso of $(T_t',\cV_t')$.
If there is an adhesion set $S\sub V(G_s)$ of $(T,\cV)$ that is not an adhesion set in $(T_t',\cV_t')$, then $G_s-S$ has a unique infinite component that is completely attached to~$S$, i.\,e.\ has all vertices from $S$ in its neighbourhood, and perhaps one finite component.
We delete that finite one.
By doing this for all choices of~$S$, we obtain a new graph~$G_s'$.
As there are only finitely many orbits on the adhesion sets in $(T,\cV)$, there exists $B_5$ such that $G_s$ is $(1,B_5)$-\qi\ to~$G_s'$ for all choices of~$G_s$.

Now we are ready to define the graph $H$ that will be \qt, \lf, planar and \qi\ to~$G$.
For that, we take the disjoint union $H'$ of the following graphs:
\begin{enumerate}[(i)]
\item one vertex $x_S$ for every adhesion set $S$ in $(T,\cV)$;
\item one vertex $x_t$ for every finite torso $H_t$ of $(T,\cV)$;
\item one copy of the decomposition tree $T_t$ for every infinite torso of \tw\ at most~$k$ and
\item the disjoint union of all graphs $G_s$ obtained from torsos $G_s'$ in the \td\ $(T_t',\cV_t')$ of the infinite planar torsos of $(T,\cV)$ that do not have \tw\ at most~$k$.
\end{enumerate}
In order to form the graph $H$, we add some edges to~$H'$:
\begin{enumerate}[(i)]\setcounter{enumi}{4}
\item an edge $x_Sx_t$ for all adhesion sets $S$ and finite torsos $H_t$ with $S\sub V_t$;
\item an edge $x_Sv_S$ or two edges from $x_S$ to the vertices incident with $e_S$ for all adhesion sets $S$ and infinite torsos of \tw\ at most~$k$ that contain~$S$ and
\item edges from all $s\in S$ to $x_S$ for all adhesion sets $S$ in $(T,\cV)$ and the graphs $G_s$ that contain~$S$.
\end{enumerate}
The resulting graph is denoted by~$H$.
By construction, $G$ is connected and $(1,B)$-\qi\ to~$H$, where $B$ is the maximum of $B_1$, $B_3B_4$ and~$B_5$.
Since we made no choices during the construction of~$H$ that were not invariant under the automorphisms, the automorphism group of~$G$ acts on~$H$.
By the choices during the construction, the stabiliser of each torso of $(T,\cV)$ still acts \qt ly on the graph that replaces this torso and as a result, $H$ is a \qt\ graph.
Obviously, it is \lf.
Since all components in~$H'$ are planar and since the vertices $x_S$ are $1$-separators and attached to either at most two adjacent vertices in a component of~$H'$ or to all vertices from the adhesion set $S$ of $(T,\cV)$ whose removal from each component of~$H'$ leave exactly one component with all of~$S$ in its neighbourhood, we obtain that $H$ is planar, too.
This finishes the proof of Theorem~\ref{thm_main}.\qed

\end{document}